\documentclass[12pt]{amsart}

\usepackage{aliascnt,amsmath,amssymb,amsthm,amsfonts,mathtools,xcolor,shuffle}
\usepackage[marginparwidth=0pt,margin=24truemm]{geometry}

\definecolor{mylinkcolor}{RGB}{16,156,81}
\definecolor{mycitecolor}{RGB}{20,80,140}

\usepackage{hyperref}
\usepackage[nameinlink]{cleveref}
\hypersetup{
    bookmarksnumbered=true,
    bookmarksopen=true,
    colorlinks=true,
    linkcolor=mylinkcolor,
    citecolor=mycitecolor,
    urlcolor=mycitecolor,
}

\numberwithin{equation}{section}
\allowdisplaybreaks[2]

\theoremstyle{plain}

\crefname{thm}{Theorem}{Theorems}

\newaliascnt{lem}{thm}
\newtheorem{lem}[lem]{Lemma}
\aliascntresetthe{lem}
\crefname{lem}{Lemma}{Lemmas}

\newaliascnt{cor}{thm}

\aliascntresetthe{cor}
\crefname{cor}{Corollary}{Corollaries}

\theoremstyle{remark}
\newaliascnt{rem}{thm}

\aliascntresetthe{rem}
\crefname{rem}{Remark}{Remarks}

\theoremstyle{plain}
\newtheorem{mainthm}{Main Theorem}

\crefname{mainthm}{Main Theorem}{Main Theorems}

\newcommand{\QQ}{\mathbb{Q}}
\newcommand{\ZZ}{\mathbb{Z}}
\newcommand{\HH}{\mathbb{H}}
\newcommand{\summ}[1]{\sum_{\substack{#1}}}
\newcommand{\ds}{\operatorname{ds}}

\title{A Weighted Sum Formula for Double Eisenstein Series}

\author{Henrik Bachmann}
\address{Graduate School of Mathematics, Nagoya University, Nagoya, Japan}
\email{henrik.bachmann@math.nagoya-u.ac.jp}

\date{\today}

\subjclass[2020]{Primary 11F11; Secondary 11A25, 11M32}
\keywords{multiple zeta values, multiple Eisenstein series, multiple divisor sums}

\begin{document}

\begin{abstract}
We prove a weighted sum formula for double Eisenstein series.  Its
corresponding identity for the generating series of multiple divisor sums was
conjectured by the author in his master's thesis.  The double Eisenstein series identity
follows from the restricted double-shuffle relations proved by the author and
Tasaka, while the proof of the divisor-sum identity is combinatorial and uses
generating series.
\end{abstract}

\maketitle

\section{Introduction}

Gangl, Kaneko and Zagier introduced double Eisenstein series in
\cite{GKZ} as objects connecting double zeta values and modular forms.  Their
Fourier expansions are built from multiple zeta values and the $q$-series
$g_{k_1,\dots,k_r}$ defined in \eqref{eq:def-g} below.  These $q$-series, which are $q$-analogues
of multiple zeta values, were introduced in the author's master's thesis
\cite{BachmannMSc} and later studied in detail by the author and K\"uhn
\cite{BK}.  We expect that the multiple Eisenstein series $G$ and the
$q$-series $g$ satisfy the same linear relations modulo lower-weight terms.
Motivated by numerical evidence, the author formulated one such identity in
his master's thesis \cite[Vermutung~5.2.7]{BachmannMSc}.  In this note we
prove it in two compatible forms.  \Cref{thm:main-G} gives the homogeneous
weight-$k$ relation between single and double Eisenstein series, and
\Cref{thm:main-g} gives the corresponding identity for the $q$-series $g$,
including its explicit lower-weight correction.

To state the results, we recall the definition of multiple Eisenstein series, which were introduced in \cite{GKZ}.
For $\tau\in\HH=\{z\in\mathbb{C}:\operatorname{Im}(z)>0\}$, define an
order on $\ZZ\tau+\ZZ$ by $\lambda_1\succ\lambda_2$ if and only if
$\lambda_1-\lambda_2\in\HH\cup\mathbb{R}_{>0}$.
For $k_1\geq3$ and $k_2,\dots,k_r\geq2$, define
\begin{align*}
 G_{k_1,\dots,k_r}(\tau)
 \coloneqq
 \summ{\lambda_1,\dots,\lambda_r\in\ZZ\tau+\ZZ\\
 \lambda_1\succ\cdots\succ\lambda_r\succ0}
 \frac{1}{\lambda_1^{k_1}\cdots\lambda_r^{k_r}}.
\end{align*}
For $a\geq3$ and $b\geq2$ with $k=a+b\geq6$, put\footnote{As usual, we set
$\binom{n}{j}=0$ if $j<0$ or $j>n$.}
\begin{align}\label{eq:alpha}
 \alpha_{a,b}
 \coloneqq\frac{60}{b}\binom{k-1}{a-1}^{-1}
 \Biggl((2^{a-2}-1)\binom{k-6}{a-2}
 +(2^{a-3}+2)\binom{k-6}{a-3}
 -\binom{k-6}{a-4}\Biggr).
\end{align}

\begin{mainthm}\label{thm:main-G}
For every integer $k\geq6$, we have
\begin{align}\label{eq:main-G}
 G_k
 =\summ{a+b=k\\a\geq3,\ b\geq2}\alpha_{a,b}\,G_{a,b}.
\end{align}
\end{mainthm}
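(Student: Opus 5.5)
We derive \eqref{eq:main-G} from the restricted double-shuffle relations for double Eisenstein series established by the author and Tasaka, which the abstract names as the source of the $G$-identity. For $a+b=k$ with $a,b\geq3$ (the range where all terms converge) they read
\begin{align*}
 G_aG_b
 &=G_{a,b}+G_{b,a}+G_k\\
 &=\sum_{j=3}^{k-2}\Bigl(\binom{j-1}{a-1}+\binom{j-1}{b-1}\Bigr)G_{j,k-j}
 +\binom{k-2}{a-1}G_{k-1,1}^{*}+\dots ,
\end{align*}
so they are linear relations among $G_k$ and the $G_{j,k-j}$. Here $G_{k-1,1}^{*}$ stands for the regularized or excluded boundary terms, which in the restricted version either vanish or are absent because $b\geq2$. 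Subtracting the two expressions for $G_aG_b$ removes the products and leaves homogeneous relations $\mathrm{R}_{a,b}$ between $G_k$ and the double Eisenstein series $G_{j,k-j}$ with $j\geq3$, $k-j\geq2$.

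The plan is to find rational coefficients $c_{a,b}$ such that $\sum c_{a,b}\mathrm{R}_{a,b}$ equals $G_k-\sum\alpha_{a,b}G_{a,b}$. Comparing coefficients turns this into a triangular-looking binomial system in the unknowns $c_{a,b}$. The factor $\binom{k-1}{a-1}^{-1}$ and the shifted binomials $\binom{k-6}{a-2},\binom{k-6}{a-3},\binom{k-6}{a-4}$ in $\alpha_{a,b}$ suggest that the $c_{a,b}$ are built from products of low-weight data, namely $k-6$ combined with the weight-$6$ normalisation $60$. I would therefore guess $c_{a,b}$ as a simple combination of $\binom{k-6}{a-3}$-type terms and then verify the identity.

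The verification is done with generating series. Put $X^{a-1}Y^{b-1}$ against each term, so that the shuffle side becomes $(X+Y)^{j-1}Y^{k-j-1}$-type substitutions. The factors $2^{a-2}$ then come from evaluating at $X=Y$, that is from $(X+Y)^{\ast}$ at equal arguments. The required identity becomes a polynomial identity in two variables of degree $k-2$, and I would prove it by expanding and checking coefficients.

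The main obstacle is finding the correct $c_{a,b}$ and correctly handling the boundary terms ($b=2$, and the terms involving $G_2$ or $G_{k-1,1}$ excluded by the restricted relations). As a fallback I would first check $k=6,7,8$ by hand to pin down the pattern. At $k=6$ the statement is $G_6=\alpha_{3,3}G_{3,3}+\alpha_{4,2}G_{4,2}$, and there $\alpha_{3,3}=\tfrac{60}{3}\cdot\tfrac{1}{10}\cdot 0$. I would then induct on $k$ using the generating-series form.
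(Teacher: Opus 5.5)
Your starting point, the Bachmann--Tasaka restricted double-shuffle relations, is the paper's, but as you set it up it cannot produce the identity. There are two problems.

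First, the terms $G_{k-1,1}$ are neither absent nor zero. Every $z_a\shuffle z_b$ contains $2\binom{k-2}{a-1}z_{k-1}z_1$, so even for $a,b\geq3$ the shuffle side involves the divergent $G_{k-1,1}$. The relation is $G^{\shuffle}(\ds(z_a,z_b))=0$ with the regularized $G^{\shuffle}_{k-1,1}$ genuinely present. These terms must \emph{cancel} in your combination, so your $\mathrm{R}_{a,b}$ are not relations among $G_k$ and the $G_{j,k-j}$ with $k-j\geq2$ alone.

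Second, and fatally, excluding $a=2$ discards an indispensable relation. At $k=6$ your only relation is $\ds(z_3,z_3)=z_6-6z_4z_2-12z_5z_1$, which does not even contain $z_3z_3$. The actual identity is $-3\ds(z_2,z_4)+2\ds(z_3,z_3)=6z_3z_3-3z_4z_2-z_6$. In general, under $P_k$ the depth-two part of $\ds(z_a,z_{k-a})$ maps to $C_a\bigl(Q_a(X,Y)-Q_a(X,X+Y)\bigr)$. The map $H\mapsto H(X,X+Y)-H(X,Y)$ is injective on symmetric polynomials divisible by $XY$, since invariance under $Y\mapsto Y+X$ forces independence of $Y$. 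Hence the elements $\ds(z_a,z_{k-a})$, $2\leq a\leq\lfloor k/2\rfloor$, are linearly independent, and the combination giving $\sum_r\alpha_{r,k-r}z_rz_{k-r}-z_k$ is unique. Its coefficient on $\ds(z_2,z_{k-2})$ is $-60/((k-1)(k-2))\neq0$, so no choice of $c_{a,b}$ with $a,b\geq3$ works. You need the shuffle-regularized $G^{\shuffle}$, for which the relation with $u=z_2$ (involving $G_2$) is available because $z_2\in\mathfrak H^{\geq2}$.

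Beyond this, the core of the proof, actually determining the coefficients, is missing. ``Guess and verify'' and an unspecified induction on $k$ do not supply it. The paper encodes the depth-two part as $A_\rho(X,Y)=\sum\binom{k-2}{r-1}\rho_{r,s}X^{r-1}Y^{s-1}$. Using a restriction of Gangl--Kaneko--Zagier's Proposition~2, it reduces the problem to finding a symmetric $H$ divisible by $XY$ with $A_\rho(X,Y)=H(X,X+Y)-H(X,Y)$. The coefficient of $z_k$ is then $-\frac{k-1}{2}\int_0^1H(t,1-t)\,dt$. The paper's choice is $H_k=\frac{60}{k-1}XY(X-Y)^2(X+Y)^{k-6}$, which does three things:
\begin{itemize}
\item $H_k(X,X+Y)-H_k(X,Y)$ equals $A_\rho$ for $\rho=\alpha$ via \eqref{eq:polynomial}. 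The powers $2^{a-2}$ come from $(2X+Y)^{k-6}$ inside $H_k(X,X+Y)$, not from setting $X=Y$.
\item Its factor $(X-Y)^2$ gives $H_k(X,X)=0$, which is exactly the cancellation of the $z_{k-1}z_1$ terms.
\item The identity $30\int_0^1t(1-t)(2t-1)^2\,dt=1$ normalizes the coefficient of $G_k$.
\end{itemize}
Finally, your $k=6$ check is miscomputed. The bracket in \eqref{eq:alpha} equals $(2^0+2)\binom{0}{0}=3$, so $\alpha_{3,3}=6$, not $0$, consistent with $G_6=6G_{3,3}-3G_{4,2}$.
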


We next give the corresponding $q$-series statement.  For positive integers
$k_1,\dots,k_r$, set
\begin{align}\label{eq:def-g}
 g_{k_1,\dots,k_r}(q)
 \coloneqq
 \summ{m_1>\cdots>m_r>0\\n_1,\dots,n_r>0}
 \frac{n_1^{k_1-1}}{(k_1-1)!}\cdots
 \frac{n_r^{k_r-1}}{(k_r-1)!}
 q^{m_1n_1+\cdots+m_rn_r},
 \qquad |q|<1.
\end{align}
For the empty index, we use the convention $g_{\varnothing}=1$.
These series can be viewed as $q$-analogues of multiple zeta values, since
for $k_1\geq2$ one has
\begin{align*}
 \lim_{q\to1^-}(1-q)^{k_1+\cdots+k_r}g_{k_1,\dots,k_r}(q)
 =\zeta(k_1,\dots,k_r)
 \coloneqq\summ{m_1>\cdots>m_r>0}
 \frac{1}{m_1^{k_1}\cdots m_r^{k_r}}.
\end{align*}

Let $\mathcal Z$ denote the $\QQ$-algebra of multiple zeta values.  The
Fourier expansion of every multiple Eisenstein series is a
$\mathcal Z[\pi i]$-linear combination of the $q$-series
$g_{k_1,\dots,k_r}$ when setting $q=e^{2\pi i \tau}$ (see \cite{GKZ,BachmannMSc}).  The corresponding relation
among the $q$-series $g$ is the following.

\begin{mainthm}\label{thm:main-g}
For every integer $k\geq6$, we have
\begin{align}\label{eq:main-g}
 g_k
 -\frac{5}{(k-1)(k-2)}g_{k-2}
 +\frac{4}{(k-1)(k-2)(k-3)(k-4)}g_{k-4}
 =\summ{a+b=k\\a\geq3,\ b\geq2}\alpha_{a,b}\,g_{a,b}.
\end{align}
\end{mainthm}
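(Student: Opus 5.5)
The plan is to prove \eqref{eq:main-g} directly through generating series, as the abstract indicates, and to treat \eqref{eq:main-G} separately. Introduce
\begin{align*}
 \mathfrak g(X)&=\sum_{k\geq1}g_kX^{k-1}=\sum_{m,n>0}e^{nX}q^{mn},\\
 \mathfrak g(X,Y)&=\sum_{k_1,k_2\geq1}g_{k_1,k_2}X^{k_1-1}Y^{k_2-1}=\summ{m_1>m_2>0\\n_1,n_2>0}e^{n_1X}q^{m_1n_1}e^{n_2Y}q^{m_2n_2}.
\end{align*}
The right-hand side of \eqref{eq:main-g} then becomes a linear functional applied to the coefficients of $\mathfrak g(X,Y)$. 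The factor $\binom{k-1}{a-1}^{-1}\frac1b$ and the binomials $\binom{k-6}{a-j}$ suggest rewriting
\[
 \alpha_{a,b}\frac{(a-1)!(b-1)!}{(k-1)!}
\]
as a combination of terms $\frac{c_j\,2^{a-j}}{(k-1)!\,b}\binom{k-6}{a-j}\cdot(\dots)$. This amounts to evaluating $\sum_{a+b=k}\alpha_{a,b}g_{a,b}$ as
\[
 \summ{m_1>m_2\\n_1,n_2}\frac{60}{(k-1)!}\sum_a c(a)\binom{k-6}{a-*}\frac{n_1^{a-1}n_2^{b-1}(a-1)!(b-1)!}{(a-1)!\,b\,(b-1)!}\dots
\]
which should collapse, via the binomial theorem, into polynomials in $n_1,n_2$ such as $(2n_1+n_2)^{k-6}$, $(n_1+n_2)^{k-6}$ and $n_2^{k-6}$, multiplied by low-degree factors. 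The division by $b$ should be handled by writing $\frac{n_2^{b}}{b}$ as an integral, or by summing over $b$ in a form that can be telescoped.

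The key step is to resum the double sum over $m_1>m_2$ after this substitution. For each pair I will pass to new variables, for instance $N=n_1$, $M=m_1-m_2$, and $n_1+n_2$ with $m_2$. The exponent then reads
\[
 m_1n_1+m_2n_2=(m_1-m_2)n_1+m_2(n_1+n_2),
\]
so polynomials in $n_1+n_2$ and $n_1$ appear naturally. The terms with $(n_1+n_2)^{k-6}$ pair with sums over $n_1<n_1+n_2$. Telescoping in the inner variable should produce single sums $\sum m^{j}n^{k-1-j}q^{mn}$. Partial-fraction-type identities should then reduce these, as for $\sum_{d\mid N}$-type divisor sums, to $g_k$, $g_{k-2}$ and $g_{k-4}$. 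Equivalently, at the level of generating functions, I aim for an identity of the form
\[
 \Phi(\mathfrak g(X,Y))=\mathfrak g(X)\Bigl(1-\tfrac{5}{?}X^2+\dots\Bigr)
\]
where the weight corrections arise from Taylor coefficients of explicit elementary functions. Those elementary functions encode the constants $5$ and $4$, possibly through $\sinh$-like kernels coming from $e^{nX}$ evaluated at $2X$.

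Finally I compare coefficients of $X^{k-1}$ and check small cases numerically (e.g.\ $k=6,7$) to fix signs. The main obstacle is the middle step: finding the right change of summation variables so that the combination of the three binomial pieces with weights $2^{a-2}-1$, $2^{a-3}+2$ and $-1$ telescopes completely to single sums. The powers of $2$ suggest the symmetrisation $n_1\leftrightarrow n_2$ combined with the substitution $(m_1,m_2)\mapsto(m_1-m_2,m_2)$ (the ``shuffle versus stuffle'' interplay). If the direct approach fails, my first alternative is the $q$-analogue of the double shuffle relations from \cite{BK}. This expresses $g_ag_b$ both via stuffle, as $g_{a,b}+g_{b,a}+g_{a+b}+\dots$, and via the partition-function shuffle. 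I would then look for a linear combination that eliminates the products $g_ag_b$ and leaves \eqref{eq:main-g}, mirroring how \Cref{thm:main-G} follows from the restricted double shuffle relations.
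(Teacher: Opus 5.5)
Your proposal leaves open the step that carries the proof, as you acknowledge, and the mechanism you suggest for it does not work. The substitution $(m_1,m_2)\mapsto(m_1-m_2,m_2)$, $(n_1,n_2)\mapsto(n_1+n_2,n_1)$ is a bijection between depth-two index sets. It turns $\sum_{m_1>m_2}\sum_{n_1,n_2}P(n_1,n_2)q^{m_1n_1+m_2n_2}$ into another sum over two free pairs, so by itself it produces no single sums and nothing telescopes. What it does prove is a shuffle-type decomposition of a \emph{product}. The part $n_1>n_2$ of $\mathfrak g(X)\mathfrak g(Y)=\sum_{m_1,m_2,n_1,n_2>0}e^{n_1X+n_2Y}q^{m_1n_1+m_2n_2}$ is, after your change of variables, exactly $\mathfrak g(X+Y,X)$, so
\[
 \mathfrak g(X)\mathfrak g(Y)=\mathfrak g(X+Y,X)+\mathfrak g(X+Y,Y)-\mathfrak g(X+Y)+R_0(X+Y),
 \qquad R_0(Z)=\sum_{m>0}e^{mZ}\frac{q^m}{(1-q^m)^2}.
\]
The reduction to single sums only comes from comparing this with the stuffle decomposition $\mathfrak g(X)\mathfrak g(Y)=\mathfrak g(X,Y)+\mathfrak g(Y,X)+\Delta(X,Y)$, where $\Delta=\sum_{u,v_1,v_2>0}e^{v_1X+v_2Y}q^{u(v_1+v_2)}$. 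Your fallback is therefore the right framework; it is the paper's, and your $\mathfrak g$ is the paper's generating series. But it puts the difficulty in the wrong place, because the products cancel automatically in stuffle minus shuffle. What must be removed is the diagonal shuffle term $-\mathfrak g(X+Y)+R_0(X+Y)$. Its $R_0$-part has $Z^r$-coefficient $\frac1r\,q\frac{d}{dq}g_r$ for $r\geq1$, a derivative term that does not occur in the theorem. At the same time, the combination must produce exactly the weights $\alpha_{a,b}$ with $a\geq3$. ``Look for a linear combination'' leaves all of this open.

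The missing idea is the specific combination: apply $\mathcal L=\partial_X\partial_Y(\partial_X-\partial_Y)^2$ to both decompositions and set $X=Y=Z$. The factor $\partial_X-\partial_Y$ annihilates every function of $X+Y$. Since $\mathcal L$ is symmetric in $X,Y$, both shuffle terms give $\mathcal A(Z)=\partial_U\partial_V^2(\partial_U+\partial_V)\mathfrak g(U,V)|_{U=2Z,V=Z}$; the evaluation at $U=2Z$ is where the powers of $2$ come from. Both stuffle terms give $\mathcal B(Z)=\partial_U\partial_V(\partial_U-\partial_V)^2\mathfrak g(U,V)|_{U=V=Z}$. Hence
\[
 \mathcal A(Z)-\mathcal B(Z)=\tfrac12(\mathcal L\Delta)|_{X=Y=Z}
 =\tfrac1{60}\bigl(\partial_Z^5-5\partial_Z^3+4\partial_Z\bigr)\mathfrak g(Z),
\]
where the last step uses $\sum_{v=1}^{m-1}v(m-v)(2v-m)^2=m(m^2-1)(m^2-4)/30$. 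The coefficient of $Z^{k-6}/(k-6)!$ on the left is $\sum_{m_1>m_2}\sum_{n_1,n_2}P_k(n_1,n_2)q^{m_1n_1+m_2n_2}$, with
\[
 P_k(X,Y)=XY\bigl(Y(X+Y)(2X+Y)^{k-6}-(X-Y)^2(X+Y)^{k-6}\bigr)=\sum_i\mu_{i,k}X^iY^{k-2-i}.
\]
Its $XY^{k-3}$-coefficient vanishes, which is why only $a\geq3$ occurs.

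Two smaller errors in your sketch:
\begin{itemize}
\item The constants $5$ and $4$ come from $m^5-5m^3+4m$ through a differential operator. Your target shape $\mathfrak g(X)(1-\tfrac{5}{?}X^2+\cdots)$ cannot work, because multiplying $\sum_k g_kX^{k-1}$ by a fixed series gives $k$-independent coefficients in front of $g_{k-2}$ and $g_{k-4}$.
\item There is no leftover division by $b$. Since $\frac1b\binom{k-1}{a-1}^{-1}=\frac{(a-1)!(b-1)!}{(k-1)!}$, one gets $\alpha_{a,b}g_{a,b}=\frac{60}{(k-1)!}\mu_{a-1,k}\sum n_1^{a-1}n_2^{b-1}q^{m_1n_1+m_2n_2}$, and no integral representation is needed.
\end{itemize}
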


The Fourier expansion in \cite[Theorem~6]{GKZ}, together with
\cite[Corollary on p.~8]{GKZ}, can be used to show that
\Cref{thm:main-G,thm:main-g} are equivalent.  In this note, however, we give
two individual proofs. 

Multiplying \eqref{eq:main-g} by $(1-q)^k$ and letting $q\to1^-$ gives the
weighted sum formula
\begin{align*}
 \zeta(k)
 =\summ{a+b=k\\a\geq3,\ b\geq2}\alpha_{a,b}\zeta(a,b),
\end{align*}
which can also be obtained from the constant term of \Cref{thm:main-G}.

After writing $m=k-5$, \Cref{thm:main-g} is equivalent to a conjecture in the author's master's thesis
\cite[Vermutung~5.2.7]{BachmannMSc}.
The terms $g_{k-4}$ and $g_{k-2}$ are the lower-weight correction to the
homogeneous weight-$k$ relation in \Cref{thm:main-G}.

For example, the first four cases of \Cref{thm:main-G} are
\begin{align*}
 G_6&=6G_{3,3}-3G_{4,2},\\
 G_7&=4G_{3,4}+3G_{4,3}-2G_{5,2},\\
 G_8&=\frac{1}{7}\bigl(20G_{3,5}+30G_{4,4}+16G_{5,3}-10G_{6,2}\bigr),\\
 G_9&=\frac{1}{14}\bigl(30G_{3,6}+60G_{4,5}+66G_{5,4}
       +35G_{6,3}-15G_{7,2}\bigr).
\end{align*}
Based on \cite{BKM}, we expect no relations among multiple Eisenstein series
in weights below $6$.  Thus, the first relation above is expected to be the
first relation among multiple Eisenstein series.

\section*{Acknowledgments}
This project was partially supported by JSPS KAKENHI Grant Number JP26K22254.

\section{Proof of the double Eisenstein series identity}

For both proofs, it is convenient to remove the factorial normalization from
$\alpha_{a,b}$.  Define
\begin{align}\label{eq:mu}
 \mu_{i,k}
 \coloneqq(2^{i-1}-1)\binom{k-6}{i-1}
 +(2^{i-2}+2)\binom{k-6}{i-2}
 -\binom{k-6}{i-3}
\end{align}
for $k\geq6$ and $2\leq i\leq k-3$.  If $a+b=k$, then
\begin{align}\label{eq:alpha-mu}
 \alpha_{a,b}
 =\frac{60}{b}\binom{k-1}{a-1}^{-1}\mu_{a-1,k}
 =\frac{60(a-1)!(b-1)!}{(k-1)!}\mu_{a-1,k}.
\end{align}
The first equality is \eqref{eq:alpha}, and the second follows by a direct
calculation.  The numbers $\mu_{i,k}$ have the following generating
polynomial:
\begin{align}\label{eq:polynomial}
 \sum_{i=2}^{k-3}\mu_{i,k}X^iY^{k-2-i}
 =XY\Bigl(Y(X+Y)(2X+Y)^{k-6}
 -(X-Y)^2(X+Y)^{k-6}\Bigr).
\end{align}

We recall the algebraic form of the restricted double-shuffle relations from \cite{BT}.  Let
\begin{align*}
 \mathfrak H=\QQ\langle x,y\rangle,
 \qquad
 \mathfrak H^1=\QQ+\mathfrak H y.
\end{align*}
For $n\geq1$, put $z_n=x^{n-1}y$, so that
$\mathfrak H^1=\QQ\langle z_1,z_2,\dots\rangle$.  The harmonic, or stuffle,
product $*$ on $\mathfrak H^1$ is the $\QQ$-bilinear product defined by
\begin{align*}
 1*w=w*1&=w,\\
 z_m u*z_n v
 &=z_m(u*z_n v)+z_n(z_m u*v)+z_{m+n}(u*v)
\end{align*}
for words $u,v,w\in\mathfrak H^1$ and $m,n\geq1$.  The shuffle product
$\shuffle$ on $\mathfrak H$ is the $\QQ$-bilinear product defined by
\begin{align*}
 1\shuffle w=w\shuffle1&=w,\\
 \varepsilon u\shuffle\delta v
 &=\varepsilon(u\shuffle\delta v)+\delta(\varepsilon u\shuffle v)
\end{align*}
for words $u,v,w\in\mathfrak H$ and letters
$\varepsilon,\delta\in\{x,y\}$.  It restricts to a product on
$\mathfrak H^1$.

Put $\mathfrak H^{\geq2}=\QQ\langle z_2,z_3,\dots\rangle$ and
$\ds(u,v)=u*v-u\shuffle v$.  The shuffle-regularized multiple Eisenstein
series of \cite{BT} define a $\QQ$-linear map
$G^\shuffle:\mathfrak H^1\to\mathcal O(\HH)$ which agrees with the lattice
sum above for $k_1\geq3$ and $k_2,\dots,k_r\geq2$.  By
\cite[Theorem~1.2]{BT}, these satisfy the restricted double-shuffle relations
\begin{align}\label{eq:rds}
 G^\shuffle\bigl(\ds(u,v)\bigr)=0
 \qquad(u,v\in\mathfrak H^{\geq2}).
\end{align}
See also \cite[Section~1]{BKM}.  The shuffle product of two depth-one words
is
\begin{align*}
 z_a\shuffle z_b
 &=\sum_{j=0}^{a-1}\binom{b-1+j}{j}z_{b+j}z_{a-j}
 +\sum_{j=0}^{b-1}\binom{a-1+j}{j}z_{a+j}z_{b-j}.
\end{align*}
We use the following restriction of \cite[Proposition~2]{GKZ} to indices
at least $2$.  For rational numbers $\rho_{r,s}$ with $r+s=k$ and
$r,s\geq2$, define
\begin{align*}
 A_\rho(X,Y)
 =\summ{r+s=k\\r,s\geq2}
 \binom{k-2}{r-1}\rho_{r,s}X^{r-1}Y^{s-1}.
\end{align*}
Suppose that
\begin{align*}
 A_\rho(X,Y)=H(X,X+Y)-H(X,Y)
\end{align*}
for a symmetric homogeneous polynomial $H\in\QQ[X,Y]$ of degree $k-2$
which is divisible by $XY$.  Then
\begin{align}\label{eq:coefficient-criterion}
 \summ{r+s=k\\r,s\geq2}\rho_{r,s}z_rz_s-\lambda z_k
 \in\operatorname{span}_{\QQ}
 \bigl\{\ds(z_a,z_{k-a}):2\leq a\leq k-2\bigr\},
 \qquad
 \lambda=\frac{k-1}{2}\int_0^1H(t,1-t)\,dt.
\end{align}
Indeed, let $P_k$ be the map from the weight-$k$ depth-two words to
$\QQ[X,Y]$ given by
\begin{align*}
 P_k(z_rz_s)=\binom{k-2}{r-1}X^{r-1}Y^{s-1}.
\end{align*}
For $a+b=k$, put
\begin{align*}
 Q_a(X,Y)=X^{a-1}Y^{b-1}+X^{b-1}Y^{a-1},
 \qquad C_a=\binom{k-2}{a-1}.
\end{align*}
The stuffle product and the shuffle formula above give
\begin{align*}
 P_k(z_a\shuffle z_b)&=C_aQ_a(X,X+Y),\\
 P_k(z_az_b+z_bz_a)&=C_aQ_a(X,Y).
\end{align*}
The polynomials $Q_a$ for $2\leq a\leq\lfloor k/2\rfloor$ form a basis
of the symmetric homogeneous polynomials of degree $k-2$ which are
divisible by $XY$.  Write
$H=\sum_{a=2}^{\lfloor k/2\rfloor}h_aQ_a$ and put $b=k-a$.  The two
formulas above show that the depth-two part of
\begin{align*}
 D=-\sum_{a=2}^{\lfloor k/2\rfloor}
 \frac{h_a}{C_a}\ds(z_a,z_b)
\end{align*}
has coefficient polynomial $A_\rho$.  Its depth-one part is
$-\sum_a(h_a/C_a)z_k$, and
\begin{align*}
 \sum_a\frac{h_a}{C_a}
 =\frac{k-1}{2}\int_0^1H(t,1-t)\,dt=\lambda,
\end{align*}
where we used
$C_a\int_0^1Q_a(t,1-t)\,dt=2/(k-1)$.  Hence $D$ is the left-hand side
of \eqref{eq:coefficient-criterion}.

\begin{lem}\label{lem:weighted-ds}
For $k\geq6$,
\begin{align}\label{eq:weighted-ds}
 \sum_{r=3}^{k-2}\alpha_{r,k-r}z_rz_{k-r}-z_k
 \in\operatorname{span}_{\QQ}
 \bigl\{\ds(z_a,z_{k-a}):2\leq a\leq k-2\bigr\}.
\end{align}
\end{lem}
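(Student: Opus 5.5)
The plan is to apply the criterion \eqref{eq:coefficient-criterion} with
$\rho_{r,s}=\alpha_{r,s}$ for $r\geq3$ and $\rho_{2,k-2}=0$, find the polynomial $H$ explicitly, and then check that the resulting constant $\lambda$ equals $1$.

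\emph{Step 1: compute $A_\rho$.} By \eqref{eq:alpha-mu}, for $r+s=k$ with $r\geq3$,
\begin{align*}
 \binom{k-2}{r-1}\alpha_{r,s}
 =\frac{(k-2)!}{(r-1)!(s-1)!}\cdot\frac{60(r-1)!(s-1)!}{(k-1)!}\mu_{r-1,k}
 =\frac{60}{k-1}\mu_{r-1,k}.
\end{align*}
Substitute $i=r-1$, so that $s-1=k-2-i$. The range $3\leq r\leq k-2$ becomes $2\leq i\leq k-3$, which is exactly the range in \eqref{eq:polynomial}. Hence
\begin{align*}
 A_\rho(X,Y)=\frac{60}{k-1}\,XY\Bigl(Y(X+Y)(2X+Y)^{k-6}-(X-Y)^2(X+Y)^{k-6}\Bigr).
\end{align*}

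\emph{Step 2: find $H$.} The subtracted term suggests the choice
\begin{align*}
 H(X,Y)=\frac{60}{k-1}\,XY(X-Y)^2(X+Y)^{k-6}.
\end{align*}
This $H$ is symmetric, homogeneous of degree $k-2$, and divisible by $XY$. It remains to check that $H(X,X+Y)$ equals the first term of $A_\rho$. Substituting $Y\mapsto X+Y$ gives $X-Y\mapsto -Y$ and $X+Y\mapsto 2X+Y$, so
\begin{align*}
 H(X,X+Y)=\frac{60}{k-1}\,X(X+Y)Y^2(2X+Y)^{k-6},
\end{align*}
which is indeed $\frac{60}{k-1}XY\cdot Y(X+Y)(2X+Y)^{k-6}$. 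Therefore $A_\rho(X,Y)=H(X,X+Y)-H(X,Y)$. (One can also reach this $H$ by writing the ansatz $H=XY(X+Y)^{k-6}\bigl(c_1(X+Y)^2+c_2XY\bigr)$ and solving, which gives $c_1=1$ and $c_2=-4$.)

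\emph{Step 3: compute $\lambda$.} The criterion gives
\begin{align*}
 \lambda=\frac{k-1}{2}\int_0^1H(t,1-t)\,dt
 =30\int_0^1t(1-t)(2t-1)^2\,dt
 =30\int_0^1\bigl(t-5t^2+8t^3-4t^4\bigr)\,dt.
\end{align*}
The last integral equals $\tfrac12-\tfrac53+2-\tfrac45=\tfrac1{30}$, so $\lambda=1$. With $\lambda=1$, \eqref{eq:coefficient-criterion} is exactly \eqref{eq:weighted-ds}.

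\emph{Main obstacle.} The only non-mechanical step is guessing $H$, and the factor $(X-Y)^2(X+Y)^{k-6}$ in \eqref{eq:polynomial} essentially dictates it. I am taking the generating-polynomial identity \eqref{eq:polynomial} as given from earlier in the paper. If it had to be verified, I would expand $(2X+Y)^{k-6}$ and $(X+Y)^{k-6}$ binomially and compare coefficients with \eqref{eq:mu}.
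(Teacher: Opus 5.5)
Your proposal is correct and is essentially the paper's proof. It makes the same choice $\rho_{2,k-2}=0$, uses the same polynomial $H_k(X,Y)=\frac{60}{k-1}XY(X-Y)^2(X+Y)^{k-6}$ satisfying $A_\rho(X,Y)=H_k(X,X+Y)-H_k(X,Y)$, and evaluates $\lambda=30\int_0^1t(1-t)(2t-1)^2\,dt=1$ in the same way. One harmless slip: your parenthetical ansatz drops the overall factor $\frac{60}{k-1}$, so the constants there are really $c_1=\frac{60}{k-1}$ and $c_2=-\frac{240}{k-1}$, which does not affect the argument.
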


\begin{proof}
Set $\rho_{r,k-r}=\alpha_{r,k-r}$ for $3\leq r\leq k-2$ and
$\rho_{2,k-2}=0$.  By \eqref{eq:alpha-mu} and \eqref{eq:polynomial}, the
corresponding polynomial in the coefficient criterion is
\begin{align*}
 A_\rho(X,Y)
 &=\frac{60}{k-1}\sum_{r=3}^{k-2}
 \mu_{r-1,k}X^{r-1}Y^{k-r-1}\\
 &=\frac{60}{k-1}XY\Bigl(
 Y(X+Y)(2X+Y)^{k-6}-(X-Y)^2(X+Y)^{k-6}\Bigr).
\end{align*}
Define the symmetric homogeneous polynomial
\begin{align*}
 H_k(X,Y)=\frac{60}{k-1}XY(X-Y)^2(X+Y)^{k-6}.
\end{align*}
Then
\begin{align*}
 A_\rho(X,Y)=H_k(X,X+Y)-H_k(X,Y).
\end{align*}
Moreover,
\begin{align*}
 \frac{k-1}{2}\int_0^1H_k(t,1-t)\,dt
 =30\int_0^1t(1-t)(2t-1)^2\,dt=1.
\end{align*}
The claim now follows from \eqref{eq:coefficient-criterion}.
\end{proof}

\begin{proof}[Proof of \Cref{thm:main-G}]
By \Cref{lem:weighted-ds} and \eqref{eq:rds}, the left-hand side of
\eqref{eq:weighted-ds} lies in the kernel of $G^\shuffle$.  All its words
have first index at least $3$, so we obtain
\begin{align*}
 G_k=\sum_{r=3}^{k-2}\alpha_{r,k-r}G_{r,k-r}.
\end{align*}
Writing $a=r$ and $b=k-r$ gives \eqref{eq:main-G}.
\end{proof}

\section{Direct proof of the divisor-sum identity}

\begin{proof}[Proof of \Cref{thm:main-g}]
This argument is independent of the multiple Eisenstein-series proof and also
displays the lower-weight terms explicitly.  Define
\begin{align*}
 A_r(q)&\coloneqq r!\,g_{r+1}(q),&
 A_{r,s}(q)&\coloneqq r!s!\,g_{r+1,s+1}(q),
\end{align*}
and their exponential generating series in the commuting variables $X,Y$
\begin{align*}
 T(X)&\coloneqq\sum_{r\geq0}A_r(q)\frac{X^r}{r!},&
 T(X,Y)&\coloneqq
 \sum_{r,s\geq0}A_{r,s}(q)\frac{X^r}{r!}\frac{Y^s}{s!}.
\end{align*}
Directly from \eqref{eq:def-g} (cf.~\cite{BK}), we get
\begin{equation}\label{eq:T}
 \begin{aligned}
 T(X)=\sum_{u,v>0}e^{vX}q^{uv},\qquad 
 T(X,Y)=
 \summ{u_1>u_2>0\\v_1,v_2>0}
 e^{v_1X+v_2Y}q^{u_1v_1+u_2v_2}.
 \end{aligned}
\end{equation}

Separating equal and unequal values of $u_1,u_2$ gives the stuffle
decomposition
\begin{align}\label{eq:stuffle-T}
 T(X)T(Y)=T(X,Y)+T(Y,X)+\Delta(X,Y),
\end{align}
where
\begin{align*}
 \Delta(X,Y)
 =\sum_{u,v_1,v_2>0}e^{v_1X+v_2Y}q^{u(v_1+v_2)}.
\end{align*}
On the other hand, summing first over $u_1,u_2$ in the product $T(X)T(Y)$
and splitting the result gives
\begin{align}\label{eq:shuffle-T}
 T(X)T(Y)=T(X+Y,X)+T(X+Y,Y)-T(X+Y)+R_0(X+Y),
\end{align}
where
\begin{align*}
 R_0(Z)=\sum_{m>0}e^{mZ}\frac{q^m}{(1-q^m)^2}.
\end{align*}

Apply $\mathcal L=\partial_X\partial_Y(\partial_X-\partial_Y)^2$ to
\eqref{eq:stuffle-T} and \eqref{eq:shuffle-T}, and then set $X=Y=Z$.
The last two terms in \eqref{eq:shuffle-T} are annihilated because they
depend only on $X+Y$.  Moreover, $\mathcal L$ is invariant under interchanging
$X$ and $Y$.  Thus, with the notation below, the two depth-two terms in
\eqref{eq:stuffle-T} give $2\mathcal B(Z)$, while those in
\eqref{eq:shuffle-T} give $2\mathcal A(Z)$.  The remaining term in
\eqref{eq:stuffle-T} gives $\mathcal D(Z)$, and hence
$2\mathcal A(Z)=2\mathcal B(Z)+\mathcal D(Z)$.  Equivalently,
\begin{align}\label{eq:A-B}
 \mathcal A(Z)-\mathcal B(Z)=\frac12\mathcal D(Z),
\end{align}
where
\begin{align*}
 \mathcal A(Z)
 &=\left.
 \partial_U\partial_V^2(\partial_U+\partial_V)T(U,V)
 \right|_{U=2Z,V=Z},\\
 \mathcal B(Z)
 &=\left.
 \partial_U\partial_V(\partial_U-\partial_V)^2T(U,V)
 \right|_{U=V=Z},\\
 \mathcal D(Z)
 &=\sum_{u,v_1,v_2>0}
 v_1v_2(v_1-v_2)^2e^{(v_1+v_2)Z}q^{u(v_1+v_2)}.
\end{align*}

We extract the coefficient of $Z^{k-6}/(k-6)!$.  On the left of
\eqref{eq:A-B}, the relevant polynomial in $v_1,v_2$ is $
 v_1v_2\Bigl(v_2(v_1+v_2)(2v_1+v_2)^{k-6}
 -(v_1-v_2)^2(v_1+v_2)^{k-6}\Bigr)$.
By \eqref{eq:polynomial}, the coefficient is
\begin{align}\label{eq:left-coeff}
 \sum_{i=2}^{k-3}\mu_{i,k}A_{i,k-2-i}(q).
\end{align}
For the right-hand side, use the elementary power-sum identity
\begin{align*}
 \sum_{v=1}^{m-1}v(m-v)(2v-m)^2
 =\frac{m(m^2-1)(m^2-4)}{30}.
\end{align*}
It follows that the coefficient of $Z^{k-6}/(k-6)!$ in
$\frac12\mathcal D(Z)$ is
\begin{align}\label{eq:right-coeff}
 \frac1{60}\bigl(4A_{k-5}(q)-5A_{k-3}(q)+A_{k-1}(q)\bigr).
\end{align}
Equating \eqref{eq:left-coeff} and \eqref{eq:right-coeff}, substituting the
definitions of $A_r,A_{r,s}$, gives
\eqref{eq:main-g} by \eqref{eq:alpha-mu}.
\end{proof}

{\bf AI \& computational resource disclosure:} The main results, their formulation, and the underlying idea of proof are the author's own. ChatGPT~5.6 and Claude Fable~5 were used throughout as research assistants: they carried out and checked computations, and a number of intermediate steps in the proofs were worked out in dialogue with these systems. All statements and proofs were verified by the author, who takes sole responsibility for them.

\end{document}